\newtheorem*{theorem*}{Theorem}
\theoremstyle{definition}
\newtheorem*{lemma*}{Lemma}
\newcommand\thickbar[1]{\accentset{\rule{2.0em}{.2pt}}{#1}}
\begin{document}

\begin{center}
    \LARGE A Short Proof of Köthe's Conjecture for Compact Rings
\end{center}

\begin{center}
     \large Scott Goodson\footnote{The University of Texas at Dallas, Scott.Goodson@utdallas.edu},  Alex Taylor\footnote{The University of Texas at Dallas, Alex.Taylor@utdallas.edu}
\end{center}

\begin{abstract}
\noindent We provide a new proof that the upper nilradical of a compact ring coincides with the sum of its left nil ideals using the properties of orthogonal idempotents in compact rings.\footnote{This paper was written as a part of the Directed Research Program at UT Dallas. The authors thank the mathematics department for providing space for us to work and Joseph Burnett for organizing the program.}
\end{abstract}

In this note all rings are assumed to be associative, but neither unital nor commutative. A topological ring is a pair $(R,\mathcal{T})$, where $R$ is a ring and $\mathcal{T}$ is a Hausdorff topology on $R$, such that $(R,+,\mathcal{T})$ is a topological group and multiplication $\cdot: R\times R\to R$ is continuous. We adopt the following standard notations and definitions. The sum of all left nil ideals of $R$ is denoted by $A(R)$. The upper nilradical of $R$ is the sum of all two-sided nil ideals of $R$ and is denoted by $N(R)$.  The Jacobson radical is denoted by $J(R)$. A nontrivial idempotent is an idempotent which is neither 0 nor 1, and a central idempotent is an idempotent belonging to the center of the ring.

Perhaps the most persistent open problem in non-commutative ring theory is whether or not $A(R) = N(R)$ for an arbitrary ring $R$, the so-called \emph{Köthe problem}. Ursul \cite{u1} proved in 1984 that they do indeed coincide for a compact topological ring $R$ by studying nilrings of bounded index. In this note we provide a new, straightforward proof that $A(R) = N(R)$ for any compact topological ring $R$.

\begin{lemma*}
Let $R$ be a compact, unital ring which is not local.\footnote{A compact unital ring is not local if and only if it has nontrivial idempotents.}  Then any non-local idempotent $e \in R$ can be decomposed into a sum of nontrivial orthogonal idempotents.
\end{lemma*}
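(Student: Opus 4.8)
The plan is to pass to the corner ring $eRe$, locate a single nontrivial idempotent there, and then read off the orthogonal decomposition of $e$ by a one-line computation. The key observation is that calling $e$ a \emph{non-local} idempotent means precisely that $eRe$, regarded as a unital ring with identity $e$, fails to be local; so once I know $eRe$ is again a compact unital ring, the characterization recorded in the footnote will hand me exactly the idempotent I need.

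First I would check that $eRe$ is a compact unital ring in its own right. The map $\phi\colon R\to R$ defined by $\phi(x)=exe$ is continuous, since multiplication in $R$ is continuous and left and right multiplication by the fixed element $e$ are continuous; hence $eRe=\phi(R)$ is the continuous image of a compact space and is therefore compact (and, being compact in the Hausdorff space $R$, closed). It is visibly closed under addition and multiplication, and $e$ is a two-sided identity for it, since $ey=y=ye$ for every $y\in eRe$. Thus $eRe$ with the subspace topology is Hausdorff with continuous operations and compact, i.e. a compact unital ring with identity $e$.

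Next, because $e$ is a non-local idempotent, $eRe$ is not local, and the footnote's characterization of non-local compact unital rings produces an idempotent $f\in eRe$ with $f\neq 0$ and $f\neq e$. I would then set $g=e-f$ and verify the required properties directly. Using $e^2=e$, $f^2=f$, and the relations $ef=fe=f$ (valid because $f$ lies in $eRe$), one computes $g^2=(e-f)^2=e-f-f+f=g$, as well as $fg=fe-f^2=0$ and $gf=ef-f^2=0$, and of course $f+g=e$. Moreover $g\neq 0$ since $f\neq e$, and neither $f$ nor $g$ is the global identity: if $f=1$ then $f=ef=e\cdot 1=e$, contradicting $f\neq e$, and the same argument rules out $g=1$. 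Hence $e=f+g$ is a decomposition into nontrivial orthogonal idempotents.

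I do not anticipate a genuine difficulty, because the substantive input, the equivalence between non-locality and the existence of nontrivial idempotents for compact unital rings, is supplied by the footnote. The only step that demands any care is the first one: confirming that $eRe$ really is a compact unital ring, so that the characterization is applicable there rather than merely to $R$. Once $ef=fe=f$ is in hand, the orthogonality identities and the exclusion of the trivial cases are entirely routine.
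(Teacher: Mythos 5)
Your proof is correct for the lemma as stated, but it takes a genuinely different and more economical route than the paper's. The paper begins with a \emph{maximal} family $\{e_i\}_{i\in I}$ of nontrivial orthogonal idempotents, sets $s=\sum_i e_i$, and shows that the corner $(1-s)R(1-s)$ must be quasi-regular --- otherwise Ursul's result would produce a local idempotent $e'$ orthogonal to every $e_i$, contradicting maximality --- so that $1-s\in J(R)$ and hence $s=1$; the case of a general non-local idempotent $e$ is then reduced to the compact unital ring $eRe$ exactly as you do. You bypass all of that: since $eRe$ is compact, unital with identity $e$, and non-local, the footnote's equivalence hands you an idempotent $f$ with $0\neq f\neq e$, and $e=f+(e-f)$ is already an orthogonal decomposition into nontrivial idempotents (your verifications of $ef=fe=f$, idempotency of $e-f$, orthogonality, and nontriviality in $R$ are all sound). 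Your argument is shorter and avoids both the quasi-regularity/local-idempotent machinery and the implicit convergence issue for the possibly infinite sum $\sum_i e_i$. What it does not buy is the extra strength the paper's proof actually delivers and later uses: there, $e$ is exhibited as the sum of a \emph{maximal} orthogonal family of idempotents of $eRe$, and the proof of the main theorem invokes precisely this maximality (to conclude that the local idempotent $e$ cannot annihilate every $f_j$). So your proof establishes the lemma as written, but if it replaced the paper's, the downstream application would need the maximality of the decomposing family to be re-established separately.
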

\begin{proof}
First suppose that $R$ is unital. We show that $1 \in R$ can be expressed as a sum of nontrivial orthogonal idempotents. Since $R$ is not local, it possesses a maximal set of nontrivial orthogonal idempotents $\{e_{i}\}_{i \in I}$. Denote their sum by $s = \sum_{i \in I}e_{i}$, which is a nonzero idempotent. Consider the compact subring $(1-s)R(1-s)$. If $(1-s)R(1-s)$ is not quasi-regular, then it contains a local idempotent $e' = (1-s)r(1-s)$ \cite{u2}. For each $i \in I$ we have
\[ e'e_{i} = (1-s)r(1-s)e_{i} = (1-s)r(e_{i}-se_{i}) = 0 \]
and similarly $e_{i}e' = 0$. Therefore $\{e_{i}\}_{i \in I}$ can be extended by adjoining $e'$, and this contradicts the maximality of the set. Therefore $(1-s)R(1-s)$ is quasi-regular, and so
\[
(1-s)R(1-s) = J((1-s)R(1-s)) \subseteq J(R),
\]
hence $(1-s) \in J(R)$ implies that $s = 1$. Thus, 1 can be decomposed as a sum of nontrivial orthogonal idempotents. Now in the case that $R$ is not necessarily unital, let $e$ be any non-local idempotent of $R$. Then $e$ is the identity element of the compact subring $eRe$, hence $e$ is a sum of nontrivial orthogonal idempotents in $eRe \subseteq R$.

\end{proof}

\begin{theorem*}
Let $R$ be a compact topological ring. Then $A(R) = N(R)$.
\end{theorem*}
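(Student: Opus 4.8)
The plan is to prove the two inclusions separately. The inclusion $N(R)\subseteq A(R)$ is immediate, since every two-sided nil ideal is in particular a left nil ideal. For the reverse inclusion it suffices to show that $A(R)$ is itself a nil ideal: one first checks that $A(R)$ is two-sided, because if $L$ is a left nil ideal and $r\in R$, then $Lr$ is again a left ideal whose elements $\ell r$ satisfy $(\ell r)^{k+1}=\ell(r\ell)^{k}r=0$ once $(r\ell)^{k}=0$ (and $r\ell\in RL\subseteq L$ is nilpotent), so $Lr$ is again a left nil ideal and $A(R)R\subseteq A(R)$. Granting that $A(R)$ is nil, it is then a two-sided nil ideal and hence contained in $N(R)$, giving equality. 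Thus everything reduces to showing that a sum of left nil ideals in a compact ring is nil, which is precisely the content of Köthe's problem for $R$.

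To isolate the difficulty I would pass to the quotient $\overline{R}=R/N(R)$. This requires knowing that $N(R)$ is a closed ideal, so that $\overline{R}$ is again a compact (Hausdorff) ring; I expect this to follow from closedness of the radical in compact rings, checking that $\overline{N(R)}$ is still nil and hence equals $N(R)$ by maximality. In $\overline{R}$ we then have $N(\overline{R})=0$, so $\overline{R}$ is semiprime, and it is enough to prove that a semiprime compact ring has no nonzero left nil ideal. Assuming for contradiction that one exists, I may pass to a power of a nonzero element to choose $0\neq y$ with $y^{2}=0$; semiprimeness yields some $r$ with $yry\neq 0$, and one checks that both $\overline{R}y$ and $y\overline{R}$ consist of nilpotent elements, so that the single element $y$ already generates nil one-sided ideals.

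The essential point is that these one-sided ideals are compact: $\overline{R}y$ is the continuous image of the compact ring $\overline{R}$ under right multiplication by $y$, hence closed, and it is a nil left ideal. Writing $C_{n}=\{x\in\overline{R}y : x^{n}=0\}$, each $C_{n}$ is closed (the $n$-th power map is continuous), the $C_{n}$ increase, and their union is all of $\overline{R}y$ because the ideal is nil. The Baire category theorem for the compact space $\overline{R}y$ then forces some $C_{N}$ to have nonempty interior, and I would leverage this, together with the orthogonal-idempotent structure supplied by the Lemma (which lets me decompose the identity of a unitization of $\overline{R}$ into local idempotents and analyse $y$ corner by corner), to conclude that $\overline{R}y$ has bounded index of nilpotency. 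A classical theorem going back to Levitzki states that a nil one-sided ideal of bounded index lies in the prime (lower) nilradical; since $\overline{R}$ is semiprime this radical is $0$, forcing $y=0$, a contradiction.

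The main obstacle is exactly this bridge from compactness to genuine algebraic finiteness. The observation that the relevant monomials $\ell r$, $r\ell$, $yry$ are \emph{individually} nilpotent is automatic and purely formal; the entire weight of the theorem rests on ruling out the Köthe phenomenon in which a sum of nilpotent elements fails to be nilpotent, and compactness must be what prevents nilpotency indices from growing without bound. Upgrading the Baire-category step from bounded index on a single translate of a neighbourhood of $0$ to a \emph{uniform} index on the whole ideal, and carrying this out through the idempotent decomposition rather than through Ursul's analysis of nilrings of bounded index, is where I expect the real work to lie. A secondary technical point to settle first is the closedness of $N(R)$, which is needed to form the compact quotient $\overline{R}$.
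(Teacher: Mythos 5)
Your strategy is essentially a reconstruction of Ursul's original 1984 argument (reduce to a semiprime compact ring, show a nil one-sided ideal has bounded index, invoke Levitzki), and it has a genuine gap at exactly the point where all the difficulty sits. The Baire category step only gives you a single $C_N$ with nonempty interior in $\overline{R}y$: some $x_0$ and a neighbourhood $V$ of $0$ with $(x_0+v)^N=0$ for all $v\in V\cap\overline{R}y$. Nilpotency index is not subadditive for non-commuting elements, so this says nothing about the index of $v$ itself, and you give no mechanism for propagating the bound $N$ from one translated neighbourhood to the whole ideal. You acknowledge this (``where I expect the real work to lie''), but that step \emph{is} the theorem; everything before it --- two-sidedness of $A(R)$, reduction to the semiprime quotient, nilpotence of the elements of $\overline{R}y$, the appeal to Levitzki --- is the formal part. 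Moreover, the appeal to the Lemma cannot close the gap as stated: the nil left ideal $\overline{R}y$ is quasi-regular and hence lies in $J(\overline{R})$, where there are no nonzero idempotents at all, so ``analysing $y$ corner by corner'' through an idempotent decomposition of a unitization does not interact with the nilpotency indices in any way you have specified. (Two further unaddressed points: whether a unitization of $\overline{R}$ can be taken compact, and the fact that the Lemma produces nontrivial orthogonal idempotents, not the local ones your sketch asks for.) Your secondary worry --- closedness of $N(R)$, or rather that $\overline{N(R)}$ is still nil --- is also left open.

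The paper's proof avoids bounded index entirely and also sidesteps the closedness issue: it works with $S=\overline{A(R)}/\overline{N(R)}$ directly (no need for the closures to be nil), notes that $S$ is compact and sits inside the picture $\overline{A(R)}\subseteq J(R)$, and shows that if $S\neq 0$ then, after passing to $S/J(S)$, one lands in a compact semisimple non-local ring in which a local idempotent $ef$ decomposes as a sum of at least two nonzero orthogonal idempotents lying in the local corner $e(fS)e$ --- an immediate contradiction. To salvage your route you would essentially have to reprove Ursul's bounded-index theorem for compact nilrings, which is precisely the machinery this paper was written to avoid.
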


\begin{proof}
Since $N(R) \subseteq 
\thickbar{N(R)} \subseteq \thickbar{A(R)} \subseteq J(R)$, it suffices to show that $S = \thickbar{A(R)}/\thickbar{N(R)} = \{0\}$. This quotient is compact as the continuous image of a compact ring, and it is Hausdorff because $\thickbar{N(R)}$ is closed. Suppose that $S$ is nonzero. If $S$ is quasi-regular then, since the Jacobson radical of a compact ring is topologically nilpotent \cite{k1}, we have
\[S = J(S) = J\big(\thickbar{A(R)}/\thickbar{N(R)}\big)
 \subseteq J(R/J(R)) = \{0\}, \]
a contradiction. Thus, we assume that $S$ is not quasi-regular. Suppose also that $S$ is semisimple, otherwise we can pass to $S/J(S)$. Therefore $S$ is unital, and $S$ is local if and only if $S$ is a division ring. Since the result follows trivially in the case that $S$ is a division ring, suppose that $S$ is not local. Then $S$ possesses a collection of nontrivial central idempotents $\{e_{i}\}_{i \in I}$ \cite{w1}. Note that these central idempotents are not local: if $e_{i}$ is local for some $i \in I$, then $e_{i}S$ is a local ring, hence a division ring. Furthermore, $e_{i}S(1-e_{i}) = 0$ so $e_{i}s(1-e_{i}) = 0$ for some $s \in S$, and since $e_{i}s \in e_{i}S$ is a unit it follows that $e_{i} = 1$ is trivial, a contradiction. Choose one central idempotent $f \in \{e_{i}\}_{i \in I}$. By the Lemma we can express $f$ as a sum of nontrivial orthogonal idempotents,
\[
f = \sum_{j \in J} f_{j} 
\]
which constitute a maximal set of orthogonal idempotents in the compact semisimple ring $fS$, and can be chosen to be central since $fS$ is topologically isomorphic to a product of matrix rings over a finite field \cite{k1}. Furthermore, $fS$ possesses a nonzero local idempotent $e \in fS$ \cite{u2}. Evidently $ef \in S$ is also a local idempotent because $(ef)S(ef) = e(fSf)e$ is local. Now

\[
ef = \sum_{j \in J} ef_{j}
\]

where $(ef_{j})^2 = ef_{j}ef_{j} = e(f_{j})^2e = ef_{j}$ for each $j \in J$ and $ef_{j}ef_{k} = ef_{j}f_{k}e = 0$ for all $j\not= k$. Note also that $e$ does not annihilate every $f_{j}$ because if $ef_{j} = 0$ for each $j \in J$ then we can adjoin $e$ to the collection $\{f_{j}\}_{j \in J}$, contradicting the fact that $\{f_{j}\}_{j \in J}$ is a maximal set of orthogonal idempotents in $fS$. We have expressed $ef$ as a sum of orthogonal idempotents, and since they are nontrivial $J$ must possess at least two indices. Thus, we can write

\begin{align*}
ef &= \sum_{j \in J} ef_{j} \\
&= ef_{n} + \sum_{j \in J\setminus{n}} ef_{j} \\
&= ef_{n} + ef_{n}' \\
&= g + g'
\end{align*}

where $g' = ef_{n}' = \sum_{j \in J\setminus{n}} ef_{j}$ is a nonzero idempotent orthogonal to $g = ef_{n}$. Note that $g \not= g'$ otherwise $g^2 = g = 0$. Now $ge = eg = g$, so $g \in e(fS)e$. Similarly, $g'e = eg' = g'$ so $g' \in e(fS)e$. Therefore $g$ and $g'$ are two different nonzero idempotents in the local ring $e(fS)e$, a contradiction. It follows that $\thickbar{A(R)}/\thickbar{N(R)} = 0$ and $A(R) = N(R)$.
\end{proof}

\end{document}